\documentclass{article}
\usepackage{amsmath,amsfonts,amssymb,amsthm}
\usepackage[round]{natbib}

\newcommand{\real}{\mathbb{R}}
\newcommand{\dnorm}{\mathcal{N}}
\newcommand{\sign}{\mathrm{sign}}
\newcommand{\tran}{\mathsf{T}}

\newcommand{\rd}{\,\mathrm{d}}

\newcommand{\wt}{\widetilde}

\newtheorem{theorem}{Theorem}
\newtheorem{lemma}{Lemma}
\theoremstyle{definition}
\newtheorem{assumption}{Assumption}

\date{January 2014}
\title{The sign of the logistic 
regression coefficient}
\author{A. B. Owen\\Stanford University
\and
P. A. Roediger\\UTRS, Inc.}
\begin{document}

\maketitle
\begin{abstract}
Let $Y$ be a binary random variable and
$X$ a scalar. Let $\hat\beta$ be the maximum
likelihood estimate of the slope in a logistic
regression of $Y$ on $X$ with intercept.
Further let $\bar x_0$ and $\bar x_1$ be the average of sample $x$
values for cases with $y=0$ and $y=1$, respectively.
Then under a condition that rules out separable
predictors, we show that $\sign(\hat\beta) = \sign(\bar x_1-\bar x_0)$.
More generally, if $x_i$ are vector valued then we
show that $\hat\beta=0$ if and only if $\bar x_1=\bar x_0$.
This holds for logistic regression and also for more
general binary regressions with inverse link functions
satisfying a log-concavity condition.
Finally, when $\bar x_1\ne \bar x_0$ then
the angle between $\hat\beta$ and $\bar x_1-\bar x_0$
is less than ninety degrees in binary regressions
satisfying the log-concavity condition and the separation
condition, when the design matrix has full rank.
\end{abstract}

\section{Introduction}

This short note is to introduce and prove
an interesting fact about logistic
regression with a scalar predictor $x$ and
an intercept.  The fact is that the
sign of the slope coefficient's maximum
likelihood estimate (MLE), be it positive, negative
or zero, matches the sign of the difference
in sample means of the predictors. 

This finding about signs was put forth as 
a conjecture in a discussion \citep{ray:roed:neye:2013}
of the paper by \cite{wu:tian:2013} on sensitivity
experiments.  
These sensitivity experiments are sequential experimental 
designs to estimate quantities such as a dose
with 50\% lethality (LD50) in toxicology. 
Other applications, such as safety and reliability
of explosives, require estimates of $x$ with
$\Pr(Y=1\mid X=x)$ much closer to $0$ or $1$.
Every once in a while a chance pattern will lead
to a negative coefficient for a predictor $x$ when
it is known scientifically that $\Pr(Y=1\mid X=x)$
can only increase with $x$.
\cite{ray:roed:neye:2013} advocate continued
testing in this circumstance and remark
that the mean difference is a simple way to detect it.
Similarly, the conjecture would allow one to constrain
the slope's MLE to be positive, by the simple device of
adding an artificial data point $(x_0,0)$
with a very small $x_0$, and/or $(x_{n+1},1)$
with a very large $x_{n+1}$.
Their conjecture can be proved using
elementary methods. The finding is interesting
and does not seem to be widely known. 

It is intuitively clear that a positive coefficient
should be more likely when $\bar x_1$, the average $x$ value for $y=1$,
is larger than $\bar x_0$, the average
$x$ value for $y=0$. But
the pattern is absolute; there
can be no exceptions stemming from
different variances, skewnesses or outliers among the $x$ values.

Logistic regression with fixed $x_i$
and random $y_i$ is an exponential family model.
Given $x_1,\dots,x_n$, the sufficient statistic
is the pair of values $n_1 = \sum_{i=1}^ny_i$ and $\sum_ix_iy_i$. 
\citep[Chapter 2.2.4]{mccu:neld:1989}.
As a result, the logistic regression coefficients
are determined by $x_1,\dots,x_n$ along with $n_1$
and $\bar x_1$.
Given $x_i$, the sufficient statistics can
be used to compute $(n_0,n_1,\bar x_0,\bar x_1)$
and vice versa, where $n_0=n-n_1$.
But this latter quadruple does not
determine the MLE on its own.  The sufficiency
is only a conditional one and does
not quite explain the sign result.
Moreover, probit regression models do not have
such sufficient statistics, yet we find that their slope 
also has a sign determined 
by that of $\bar x_1-\bar x_0$, without regard
to other features of the $x_i$ sample. The sign
result stems from log concavity of the logistic 
and Gaussian density functions
and it extends to many other binary regressions.

Section \ref{sec:notation} introduces the notation
we need. Section~\ref{sec:proof} has our elementary proof
and Section~\ref{sec:vector} considers two
generalizations for vector-valued predictors
$x_i\in\real^d$. Both generalizations make
some mild assumptions about the configuration
of $x_i$ values.  In binary regressions, the
inverse of the link function is usually
a cumulative distribution function (CDF).
When that CDF corresponds to a log concave
probability density function, we find
that the coefficient of $x$ is zero if and only
if the mean $x$-values for $y=0$ and $y=1$ coincide.
When the means do not coincide, the regression coefficient
makes less than a $90$ degree
angle with the difference in $x$ means.

We conclude this section by mentioning some similar results.
When $X\in\real^d$  are independently $\dnorm(\mu_y,\Sigma)$ distributed
conditionally on $Y=y$, for a covariance matrix $\Sigma$ of full rank,
then the population version of the logistic
regression coefficient is $\beta = \Sigma^{-1}(\mu_1-\mu_0)$.
In this case, $\beta=0$ if and only if $\mu_1=\mu_0$.
Furthermore, when $\mu_1\ne\mu_0$ then
$\beta^\tran(\mu_1-\mu_0)>0$.
In some infinitely imbalanced limits where $n_0\to\infty$ while
$n_1$ remains fixed, the MLE of the slope coefficient
depends on $x_i$ for $y_i=1$ only through
$\bar x_1$ \citep{infimbal}. This work grew from a
correspondence about using the result in \cite{infimbal}
to address the conjecture in \cite{ray:roed:neye:2013}.

\section{Notation and basic result}\label{sec:notation}

In the scalar predictor case,
the data are $(x_i,y_i)$ for $i=1,\dots,n$
with $x_i\in\real$ and $y_i\in\{0,1\}$.
There are $n_0$ observations with $y_i=0$
and $n_1$ with $y_i=1$. To avoid trivial
complications, we assume that $\min(n_0,n_1)>0$.

Let $\bar x_1 = (1/n_1)\sum_{i=1}^nx_iy_i$
and $\bar x_0 = (1/n_0)\sum_{i=1}^nx_i(1-y_i)$
be the sample averages of $x$ for observations
with $y_i=1$ and $y_i=0$, respectively.
A logistic regression model has
$$\Pr( Y=1\mid X=x) = 
\frac{\exp(\alpha + \beta x)}
{1+\exp(\alpha + \beta x)}\equiv p(x;\alpha,\beta).$$
The likelihood function is
\begin{align}\label{eq:likelihood}
L(\alpha,\beta) = 
\prod_{i=1}^n p(x_i;\alpha,\beta)^{y_i}
(1-p(x_i;\alpha,\beta))^{1-y_i}.
\end{align}
This model has a well defined maximum likelihood
estimate if the $x$ data for $y=0$ overlap
sufficiently with those for $y=1$. 
In Section~\ref{sec:vector} we state
the overlap conditions given by \cite{silv:1981}.

For scalar $x$, Silvapulle's conditions simplify.
Let 
$L_0 = \min\{x_i\mid y_i=0\}$,
$U_0 = \max\{x_i\mid y_i=0\}$,
$L_1 = \min\{x_i\mid y_i=1\}$, and
$U_1 = \max\{x_i\mid y_i=1\}$ be the
extreme values of $x$ in each of the
two groups.
It is sufficient to have
\begin{align}\label{eq:olap1}
L_0 < U_1\ \&\ L_1 < U_0.
\end{align}
If the intervals $[L_0,U_0]$ and $[L_1,U_1]$
overlap in an interval of positive length, then~\eqref{eq:olap1}
is satisfied.  That is the usual case when
logistic regression is used, but other corner cases
satisfy Silvapulle's condition too.
For instance, if all the $x$'s for one $y$ value,
say $y=1$, are identical, then~\eqref{eq:olap1} can still
hold so long as $L_0<L_1=U_1<U_0$.

We can even weaken~\eqref{eq:olap1} to allow the $x$
values for one group to form a zero-length interval
tied with an extreme value from the other group:
\begin{equation}\label{eq:olap2}
\begin{split}
L_0&=U_0=L_1<U_1
\quad\text{or}\quad 
L_1<U_1=L_0=U_0
\quad\text{or}\\
L_1&=U_1=L_0<U_0
\quad\text{or}\quad 
L_0<U_0=L_1=U_1.
\end{split}
\end{equation}
We cannot however have $L_0=L_1=U_0=U_1$. 
For scalar $x$ with $n_0>0$ and $n_1>0$,
Silvapulle's conditions are equivalent 
to \eqref{eq:olap1} or \eqref{eq:olap2}.

The sign function we use is defined for $z\in\real$ by
$\sign(z)=1$ for $z>0$, $\sign(z) = -1$ for $z<0$, 
and $\sign(0)=0$.
Our first result is the following:
\begin{theorem}\label{thm:mainone}
Let $x_i\in \real$ and $y_i\in\{0,1\}$
for $i=1,\dots,n$. Assume that both $n_1=\sum_{i=1}^ny_i>0$ 
and $n_0=n-n_1>0$ and that $x_i$ and $y_i$
satisfy an overlap condition~\eqref{eq:olap1}
or~\eqref{eq:olap2}.
Then the likelihood~\eqref{eq:likelihood} has
a unique maximizer
$(\hat\alpha,\hat\beta)$ 
with $\sign(\hat\beta) = \sign(\bar x_1-\bar x_0)$.
\end{theorem}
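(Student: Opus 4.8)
The plan is to work with the log-likelihood $\ell(\alpha,\beta) = \log L(\alpha,\beta)$ and exploit its concavity. First I would record the first-order conditions at an interior maximizer. Writing $p_i = p(x_i;\alpha,\beta)$, the score equations are $\sum_i (y_i - p_i) = 0$ and $\sum_i x_i(y_i - p_i) = 0$. The first equation says $\sum_i p_i = n_1$, i.e.\ the fitted probabilities sum to the number of successes; the second says $\sum_i x_i p_i = \sum_i x_i y_i = n_1 \bar x_1$. Existence and uniqueness of $(\hat\alpha,\hat\beta)$ under the overlap conditions \eqref{eq:olap1}--\eqref{eq:olap2} I would cite from Silvapulle's results (to be restated in Section~\ref{sec:vector}); the log-likelihood is strictly concave on the relevant subspace when the predictors are not separable, so the stationary point, if it exists, is the unique global maximizer.

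The heart of the argument is to determine the sign of $\hat\beta$ from these equations. The key step is to consider the function $g(\beta) = \ell(\hat\alpha(\beta),\beta)$, the \emph{profile} log-likelihood obtained by maximizing out $\alpha$ for each fixed $\beta$; equivalently, evaluate $\partial \ell/\partial\beta$ along the curve where $\partial\ell/\partial\alpha = 0$. By the envelope theorem, $g'(\beta) = \partial\ell/\partial\beta\big|_{\alpha = \hat\alpha(\beta)} = \sum_i x_i(y_i - \tilde p_i)$, where $\tilde p_i$ are the fitted probabilities using $(\hat\alpha(\beta),\beta)$ and therefore satisfy $\sum_i \tilde p_i = n_1$. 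The cleanest way to expose the sign is to evaluate $g'(0)$: when $\beta = 0$ the profile intercept makes every $\tilde p_i$ equal to the common value $n_1/n$, so $g'(0) = \sum_i x_i(y_i - n_1/n) = n_1\bar x_1 - (n_1/n)\sum_i x_i = n_1\bar x_1 - (n_1/n)(n_1\bar x_1 + n_0\bar x_0) = (n_0 n_1/n)(\bar x_1 - \bar x_0)$. Since $g$ is strictly concave (inheriting concavity from $\ell$, with strict concavity from the overlap condition) and maximized at $\hat\beta$, the sign of $g'(0)$ equals the sign of $\hat\beta - 0$. This gives $\sign(\hat\beta) = \sign(g'(0)) = \sign(\bar x_1 - \bar x_0)$ exactly, including the case $\bar x_1 = \bar x_0 \Rightarrow \hat\beta = 0$.

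I expect the main obstacle to be handling the boundary cases in the overlap condition \eqref{eq:olap2}, where one group's $x$-values collapse to a single point tied with an extreme of the other group. In those cases the MLE still exists by Silvapulle's criterion, but one must check that the profile log-likelihood $g$ is still strictly concave (so that $g'(0) \ne 0$ forces $\hat\beta$ strictly on one side) rather than merely concave with a flat stretch. I would argue that strict concavity of $g$ fails only if all $x_i$ with $\tilde p_i \in (0,1)$ are equal, and that the overlap conditions rule this out because they guarantee at least two distinct $x$-values receiving interior fitted probabilities. A secondary technical point is justifying the envelope-theorem step and the differentiability of $\hat\alpha(\beta)$, which follows from the implicit function theorem applied to the strictly concave-in-$\alpha$ map $\alpha \mapsto \ell(\alpha,\beta)$. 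With those checks in place, the sign identity is immediate.
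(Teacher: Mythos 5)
Your proposal is correct, but it reaches the sign identity by a genuinely different mechanism than the paper. The paper works directly at the MLE: it combines the two score equations \eqref{eq:foralpha} and \eqref{eq:forbeta} into the identity $n_1(\bar x_1-\bar x)=\sum_{i=1}^n\hat p_i(x_i-\bar x)$ and then signs the right-hand side by a Chebyshev-type correlation argument --- when $\hat\beta>0$ the fitted probabilities $\hat p_i$ are a strictly increasing function of $x_i$, so centering them at the fitted value $\wt p$ at $\bar x$ makes every term $(\hat p_i-\wt p)(x_i-\bar x)$ nonnegative, with at least two strictly positive. You instead profile out the intercept and apply a first-derivative test to the concave profile log-likelihood $g$ at $\beta=0$, where the fitted probabilities collapse to the constant $n_1/n$ and the profile score evaluates in closed form to $(n_0n_1/n)(\bar x_1-\bar x_0)$; concavity of $g$ plus uniqueness of the overall maximizer then transfers the sign of $g'(0)$ to $\hat\beta$. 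Both routes lean on Silvapulle for existence and uniqueness and on the intercept equation forcing the mean fitted probability to equal $n_1/n$, and your closed-form evaluation of $g'(0)$ is essentially the paper's observation that $\sign(\bar x_1-\bar x)=\sign(\bar x_1-\bar x_0)$. What your route buys: the sign is read off at $\beta=0$ rather than at the unknown MLE, you never use monotonicity of $p(\cdot;\alpha,\beta)$ in $x$ (only joint concavity of $\ell$), and the same profile-score-at-zero computation extends almost verbatim to general log-concave links and to vector $x_i$, which the paper treats separately in Theorems~\ref{thm:zero} and~\ref{thm:angle}. What the paper's route buys is elementariness: no envelope theorem, no implicit function theorem, no discussion of whether strict concavity survives profiling. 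On that last point, your worry about strict concavity of $g$ under the boundary cases of \eqref{eq:olap2} is a non-issue: concavity of $g$ together with uniqueness of the maximizer already suffices for the first-derivative test (since $g'$ is nonincreasing, $g'(0)>0$ forces the maximizer into $(0,\infty)$, and $g'(0)=0$ forces it to be $0$), and in any case strict concavity of $\ell$ holds whenever the $x_i$ are not all equal --- which \eqref{eq:olap1} and \eqref{eq:olap2} guarantee --- and is inherited by the partial maximum $g$.
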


The conclusion of Theorem~\ref{thm:mainone}
still holds in some cases where neither~\eqref{eq:olap1}
nor~\eqref{eq:olap2}  hold,
though it may require some interpretation.
For instance, if  $U_0<L_1$,
then $\bar x_1>\bar x_0$ and also $\hat\beta = +\infty$.
Likewise, if  $U_1<L_0$,
then $\bar x_1<\bar x_0$ with $\hat\beta = -\infty$.
So these two cases are included if we take
$\sign(\pm\infty)=\pm1$.
An exception arises
when all $x_i$ have
the same value. Then $\bar x_1-\bar x_0=0$ but
the likelihood has no unique maximizer.

\section{Proof of Theorem~\ref{thm:mainone}}\label{sec:proof}

The existence of a unique maximizer 
for logistic regression here follows from
the theorem in \cite{silv:1981}. So we only
need to consider the sign of $\hat\beta$.

The log likelihood in the logistic regression is
$$\ell(\alpha,\beta)
=\sum_{i=1}^ny_i(\alpha+\beta x_i)-\log(1+\exp(\alpha+\beta x_i)).$$
This is a concave function of the parameter $(\alpha,\beta)$.
The maximum likelihood estimates $(\hat\alpha,\hat\beta)$ are 
attained by setting
\begin{align}\label{eq:foralpha}
0  = \frac{\partial \ell}{\partial\alpha}
 = \sum_{i=1}^n (y_i - p(x_i;\alpha,\beta))
\end{align}
and
\begin{align}\label{eq:forbeta}
0  = \frac{\partial \ell}{\partial\beta}
 = \sum_{i=1}^n (y_i - p(x_i;\alpha,\beta))x_i.
\end{align}
We will abbreviate $p(x_i;\hat\alpha,\hat\beta)$
to $\hat p_i$.
From equation~\eqref{eq:foralpha} we find
that $\bar p \equiv (1/n)\sum_{i=1}^n \hat p_i = n_1/n$.
This is a well-known consequence of including
an intercept term in logistic regression.

Subtracting $\bar x$ times equation~\eqref{eq:foralpha}
from equation~\eqref{eq:forbeta} yields
$
\sum_{i=1}^n (y_i - \hat p_i)(x_i-\bar x)=0.
$
After  rearranging the sum we have
\begin{align}\label{eq:penultimate}
n_1(\bar x_1-\bar x) = \sum_{i=1}^n\hat p_i(x_i-\bar x).
\end{align}

On the left of~\eqref{eq:penultimate}
we find that $\sign(\bar x_1-\bar x) 
= \sign( \bar x_1 - (n_1/n)\bar x_1-(n_0/n)\bar x_0)
= \sign( (n_0/n)(\bar x_1 - \bar x_0)) = \sign(\bar x_1-\bar x_0)$.
For the right side, we consider three cases.
First, if $\hat\beta=0$, then $\hat p_i$ is constant
and the right side of~\eqref{eq:penultimate} is zero.

Second, if $\hat\beta>0$, then $\hat p_i$ is a strictly increasing
function of $x_i$. Then
\begin{align}\label{eq:second}
\sum_{i=1}^n \hat p_i (x_i-\bar x)
=\sum_{i=1}^n (\hat p_i -\wt p)(x_i-\bar x),
\end{align}
where $\wt p = 
\exp(\hat\alpha+\hat\beta\bar x)
/(1+\exp(\hat\alpha+\hat\beta\bar x))$.
Each term on the right of~\eqref{eq:second} is a product
of two positive numbers, two negative numbers
or two zeros.  Therefore~\eqref{eq:second}
cannot be negative.
Because the $x_i$ cannot all be 
equal under either~\eqref{eq:olap1} or~\eqref{eq:olap2},
at least two of the
terms in~\eqref{eq:second} must be strictly positive.  
Therefore the right side of
\eqref{eq:penultimate} is positive when $\hat\beta>0$.

Similarly if $\hat\beta<0$ (the third case) then the right side
of \eqref{eq:penultimate} is negative. In all three
cases, the sign of $\hat\beta$ matches the sign
of the right side of~\eqref{eq:penultimate} and
hence equals the sign of $\bar x_1-\bar x_0$.

\section{Generalizations}\label{sec:vector}

Here we generalize the connection
between $\hat\beta$ and the difference
in group means
for $x_i\in\real$ to some other settings.
The first setting
is to allow $x_i\in\real^d$ for an integer $d\ge 1$.
As before, we let $\bar x_0$ and $\bar x_1$
be the averages of $x_i$ for $y=0$ and for $y=1$
respectively.
The logistic regression model is now
$\Pr( Y=1\mid X=x) = \exp(\alpha + x^\tran\beta)
/(1+\exp(\alpha + x^\tran\beta))$.

The second generalization extends the logistic
model to models of the form
$\Pr( Y=1\mid X=x) = G(\alpha+x^\tran\beta)$
where $G(\cdot)$ is a non-decreasing function
from $\real$ to $[0,1]$.
The function $G^{-1}$ is called the link function.
The link function applied to 
$\Pr(Y=1\mid X=x)$ is an affine function, 
$\alpha + x^\tran\beta$, of $x$.
Besides the logistic model, some important
alternatives are the probit model
with $G(z) = \Phi(z)$ where $\Phi$ is the
CDF of the $\dnorm(0,1)$ distribution,
and the complementary log-log link function
whose inverse is $G(z) = 1-\exp(-\exp(z))$.
See~\cite{mccu:neld:1989}.

\subsection{Assumptions}

Before generalizing to other contexts,
we present conditions that we will need.
We need assumptions about the data and
assumptions about the link function.
For the data, we first extend
$x_i$ to $\wt x_i = (1,x_i^\tran)^\tran$
whose first component creates the
intercept term.

\begin{assumption}[Full rank condition]\label{as:full}
The matrix ${\mathcal X}\in\real^{n\times(d+1)}$
with $i$'th row equal to $\wt x_i^\tran$ has
full rank $d+1\le n$.
\end{assumption}

Assumption~\ref{as:full} is commonly made in regression
settings. When it fails to hold, then one of the component
variables in $x_i$ can be replaced by an affine combination of the
others. In that case, such redundant variables are often dropped from
the model until Assumption~\ref{as:full} holds.


Next we state an assumption that keeps the
maximum likelihood estimates bounded.  This
assumption imposes some overlap among the $x$'s
for which $y=0$ and the ones for which $y=1$.
Using the extended predictors, let
\begin{align}\label{eq:cones}
S = \Biggl\{ \sum_{i: y_i=1}k_i\wt x_i\mid k_i>0\Biggr\},
\quad\text{and}\quad
F = \Biggl\{ \sum_{i: y_i=0}k_i\wt x_i\mid k_i>0\Biggr\}.
\end{align}
These are open convex cones in $\real^{d+1}$ generated by 
the extended predictor values for $y=1$ and $y=0$ respectively.
Silvapulle's overlap condition is that
either $S\cap F \ne\varnothing$ or $S=\real^{d+1}$
or $F=\real^{d+1}$.
The latter two possibilities cannot hold in our
setting with an intercept, so we only
need the first condition, which we label Assumption~\ref{as:silolap}.

\begin{assumption}\label{as:silolap}[Overlap condition]
Let the cones $S$ and $F$ be defined from the
data as at~\eqref{eq:cones}. Then
$S\cap F\ne\varnothing$.
\end{assumption}

The above are the assumptions we need on the
data. Next, we give an assumption for the link function.

\begin{assumption}[Silvapulle's (1981) link condition]\label{as:link}
The inverse link function $G$ 
is strictly increasing
at every value of $z$ with $0<G(z)<1$,
and both $-\log(G(z))$ and $-\log(1-G(z))$ are convex
functions of $z$.
\end{assumption}

Silvapulle's definition of convexity allows
functions that take the value $+\infty$.
For example if $G$ is the CDF of the $U[0,1]$
distribution then both $-\log(G(z))$ and $-\log(1-G(z))$
are convex functions, the former equalling $\infty$
for $z\le0$ and the latter equalling $\infty$
for $z\ge 1$.
There is a typographical error in part (iii)
of the Theorem in \cite{silv:1981}:  it supposes
that $-\log(G)$ and $\log(1-G)$ are convex but
the second one should be $-\log(1-G)$.

Assumption~\ref{as:link} requires both the CDF
$G$ and the survivor function $1-G$
to be log-concave functions of $z$.  A log-concave
function is one whose logarithm is concave.
Many probability density functions are log-concave.
Both the CDF and survivor functions inherit log concavity
from the density function.

\begin{lemma}\label{lem:bagnberg}
Let $g(z)$ be a probability density function for $z\in\real$.
If
$-\log(g(z))$ is convex
then $G(z)=\int_{-\infty}^zg(t)\rd t$ satisfies
Assumption~\ref{as:link}.
\end{lemma}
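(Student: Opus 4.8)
The plan is to verify the three requirements of Assumption~\ref{as:link} in turn: strict monotonicity of $G$ wherever $0<G(z)<1$, convexity of $-\log G$, and convexity of $-\log(1-G)$. The monotonicity part is immediate. Since $-\log g$ is convex with values in $(-\infty,+\infty]$, the set $\{z:g(z)>0\}$ is an interval $I$; because $g$ is a Lebesgue density, $I$ has positive length, and its interior $(a,b)$ (which is also the interior of the support $\bar I=[a,b]$, with $a$ or $b$ possibly infinite) is contained in $\{g>0\}$. One checks $0<G(z)<1$ holds exactly for $z\in(a,b)$, and there $G(w)-G(z)=\int_z^w g>0$ for every $w>z$, so $G$ is strictly increasing at such $z$.

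The substantive step is log-concavity of $G$, i.e.\ convexity of $-\log G$. I would obtain it from the one-dimensional Pr\'ekopa--Leindler inequality, equivalently from the standard fact that the running integral of a log-concave function is log-concave (see the works of Pr\'ekopa and of Bagnoli and Bergstrom on log-concavity). Concretely: fix $z_0<z_1$ and $\lambda\in(0,1)$, set $z_\lambda=(1-\lambda)z_0+\lambda z_1$, and apply Pr\'ekopa--Leindler to $h_0=g\,\mathbf 1_{(-\infty,z_0]}$, $h_1=g\,\mathbf 1_{(-\infty,z_1]}$, $h_\lambda=g\,\mathbf 1_{(-\infty,z_\lambda]}$. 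The pointwise hypothesis $h_\lambda((1-\lambda)u+\lambda v)\ge h_0(u)^{1-\lambda}h_1(v)^\lambda$ holds because $u\le z_0$ and $v\le z_1$ force $(1-\lambda)u+\lambda v\le z_\lambda$, and there log-concavity of $g$ gives $g((1-\lambda)u+\lambda v)\ge g(u)^{1-\lambda}g(v)^\lambda$ (the inequality being trivial when $u>z_0$ or $v>z_1$). The conclusion $G(z_\lambda)=\int h_\lambda\ge\bigl(\int h_0\bigr)^{1-\lambda}\bigl(\int h_1\bigr)^\lambda=G(z_0)^{1-\lambda}G(z_1)^\lambda$ is exactly log-concavity. (When $g$ happens to be differentiable this can instead be seen directly: from $g(z)^2=\int_{-\infty}^z g(z)g'(t)\rd t$ one gets $g(z)^2-G(z)g'(z)=\int_{-\infty}^z\bigl(g(z)g'(t)-g(t)g'(z)\bigr)\rd t\ge 0$, since $(\log g)'$ is nonincreasing, and this is precisely $(\log G)''\le 0$ on the interior of the support; but a general log-concave density need not be smooth, so I would write down the Pr\'ekopa--Leindler version.)

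Convexity of $-\log(1-G)$ then follows by a reflection. The density $\tilde g(z)=g(-z)$ is again log-concave, so its running integral $\tilde G$ is log-concave by the previous step, and $1-G(z)=\int_z^\infty g=\int_{-\infty}^{-z}\tilde g=\tilde G(-z)$; hence $\log(1-G)(z)=\log\tilde G(-z)$ is a concave function precomposed with the affine map $z\mapsto -z$, so it is concave. It remains to record that the extended-real-valued reading of ``convex'' in Assumption~\ref{as:link} is what makes this clean: where $G(z)=0$ (resp.\ $1-G(z)=0$) the function $-\log G$ (resp.\ $-\log(1-G)$) is $+\infty$, and since $\{G>0\}$ and $\{1-G>0\}$ are intervals, the resulting functions are convex in Silvapulle's sense.

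The main obstacle I anticipate is purely technical regularity: a log-concave density need not be differentiable, or even continuous at the endpoints of its support, so the transparent ``$(\log G)''\le 0$'' computation is not directly licensed and one must either approximate or, as above, invoke the one-dimensional Pr\'ekopa--Leindler inequality, which needs only measurability. Correctly handling the boundary of the support and the $\pm\infty$ values is secondary bookkeeping and poses no real difficulty.
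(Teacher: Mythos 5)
Your proof is correct, and it differs from the paper's in how it handles the substantive step. The paper simply cites Lemma~3 of An (1998) for the log-concavity of both $G$ and $1-G$ (noting only that An's measurability hypothesis is automatic for log-concave $g$), and then gives the same strict-monotonicity argument you give: the set where $g>0$ is an interval by convexity of $-\log g$, so $0<G(z)<1$ forces $g>0$ on a neighbourhood of $z$. You instead make the key step self-contained: you derive log-concavity of $G$ from the one-dimensional Pr\'ekopa--Leindler inequality applied to the truncations $g\,\mathbf 1_{(-\infty,z_j]}$, and then get log-concavity of $1-G$ for free by the reflection $1-G(z)=\tilde G(-z)$ with $\tilde g(z)=g(-z)$ --- a clean way to avoid redoing the argument for the survivor function. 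Your application of Pr\'ekopa--Leindler is checked correctly (the pointwise hypothesis holds because $u\le z_0$, $v\le z_1$ imply $(1-\lambda)u+\lambda v\le z_\lambda$, and is trivial otherwise), and you are right that the naive $(\log G)''\le 0$ computation is not licensed without smoothness, which is exactly why the paper outsources this step to a reference and why your Pr\'ekopa--Leindler route, which needs only measurability, is the appropriate substitute. The trade-off is the usual one: the paper's proof is two lines at the cost of an external dependency; yours is longer but exposes the mechanism and would survive in a self-contained exposition.
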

\begin{proof}
Log concavity of $G$ and $1-G$ both follow from Lemma 3 of \cite{an:1998}.
\cite{an:1998} requires $g$ to be measurable but that holds automatically
for log concave $g$.
Let $z\in\real$
satisfy  $0<G(z)<1$. Then there is a point $a<z$ with $g(a)>0$
and a point $b>z$ with $g(b)>0$.  From log concavity of $g$
we have $g(t)>\min(g(a),g(b))>0$ for all $t$ in the interval $(a,b)$
which contains $z$. 
Therefore $G$ is strictly increasing at $z$.
\end{proof}

Lemma~\ref{lem:bagnberg} makes it easy to identify
a large family of link functions that satisfy
Silvapulle's condition.
If $g(z)$ is a log concave density on $\real$
then it satisfies Assumption~\ref{as:link}
including the requirement to be strictly positive
where the corresponding CDF $G$ satisfies
$0<G(z)<1$.
\cite{bagn:berg:2005} list the following
log-concave densities among others:
uniform, normal, exponential, logistic, extreme value,
double exponential, $cz^{c-1}$ on $(0,1]$ for $c\ge 1$, Weibull
with shape parameter at least $1$, and
the  Gamma distribution with shape at least $1$.

The inverse link for the complementary log-log  model
is easily seen to satisfy Assumption~\ref{as:link}:
It corresponds to the density
$g(z) = \exp( z-\exp(z))$ and $\log(g(z))$ has second
derivative $-\exp(z)<0$.
The Cauchy CDF, $G(z) = (1/\pi)\arctan(z)+1/2$,
has been suggested for binary regression models \citep{morg:smit:1992}.
It is somewhat robust to mislabeling among the $y_i$,
but this $G$ is not log concave.

\subsection{Equivalence of $\hat\beta=0$ and $\bar x_0=\bar x_1$}

The log likelihood function for binary
regression with inverse link $G$ is
\begin{align}\label{eq:loglikG}
\ell(\alpha,\beta) = 
\sum_{i=1}^n y_i \log(G(\alpha+x_i^\tran\beta))
+(1-y_i) \log(1-G(\alpha+x_i^\tran\beta)).
\end{align}

\begin{theorem}\label{thm:zero}
Let $x_i\in\real^d$ and $y_i\in\{0,1\}$ for $i=1,\dots,n$
satisfy the full rank Assumption~\ref{as:full}
and the overlap Assumption~\ref{as:silolap}. 
Let $G$ satisfy the link Assumption~\ref{as:link}.
Then $\bar x_1 = \bar x_0$ if and only
if the model~\eqref{eq:loglikG} has a unique
maximum likelihood estimate
$(\hat\alpha,\hat\beta)$ with $\hat\beta=0$.
\end{theorem}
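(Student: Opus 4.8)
The plan is to collapse the $d$-dimensional problem onto the one-parameter intercept-only model and then read off the gradient. First I would apply the theorem of \cite{silv:1981}: under Assumptions~\ref{as:full}--\ref{as:link} the log likelihood~\eqref{eq:loglikG} is concave and has a unique maximizer $(\hat\alpha,\hat\beta)$. Applying the same theorem to the intercept-only model---which has $d=0$, trivially satisfies the full rank condition, and satisfies the overlap condition because $n_0>0$ and $n_1>0$---shows that $\ell_0(\alpha):=\ell(\alpha,0)=n_1\log G(\alpha)+n_0\log\bigl(1-G(\alpha)\bigr)$ has a unique maximizer $\alpha^*$, at which $0<G(\alpha^*)<1$ (otherwise $\ell_0(\alpha^*)=-\infty$, impossible since $\ell_0$ is finite somewhere). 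Since $\hat\beta=0$ forces $\hat\alpha$ to maximize $\ell_0$, the statement ``the MLE has $\hat\beta=0$'' is equivalent to ``$(\alpha^*,0)$ is the MLE.''

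Next I would compute $\nabla\ell(\alpha^*,0)$. Set $A=G'(\alpha^*)/G(\alpha^*)$ and $B=G'(\alpha^*)/\bigl(1-G(\alpha^*)\bigr)$. Stationarity of $\ell_0$ at $\alpha^*$ reads $n_1A=n_0B$, which is exactly $\partial\ell/\partial\alpha=0$ at $(\alpha^*,0)$. For the slope block, every linear predictor at $(\alpha^*,0)$ equals $\alpha^*$, so $\partial\ell/\partial\beta$ there is $A\sum_{i:y_i=1}x_i-B\sum_{i:y_i=0}x_i=An_1\bar x_1-Bn_0\bar x_0=An_1(\bar x_1-\bar x_0)$, using $n_0B=n_1A$. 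Hence $\nabla\ell(\alpha^*,0)=\bigl(0,\,An_1(\bar x_1-\bar x_0)\bigr)$.

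The equivalence now follows from concavity. Since $\ell$ is concave and differentiable near $(\alpha^*,0)$, that point is a global maximizer---hence, by uniqueness, the MLE---if and only if $\nabla\ell(\alpha^*,0)=0$, i.e.\ if and only if $An_1(\bar x_1-\bar x_0)=0$. Combining with the reduction above, $\hat\beta=0\iff\bar x_1=\bar x_0$ provided $A\ne0$. I expect proving $A\ne0$, i.e.\ $G'(\alpha^*)\ne0$, to be the one step requiring care, and it is precisely where the strict-monotonicity clause of Assumption~\ref{as:link} enters: $\log G$ is concave and strictly increasing on a neighborhood of $\alpha^*$, so its (one-sided) derivatives there are strictly positive, and likewise those of $\log(1-G)$ are strictly negative; when $G$ is differentiable at $\alpha^*$ this gives $A>0$ outright, and otherwise the same argument goes through with one-sided derivatives and subdifferentials replacing $\nabla\ell$. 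For all the standard links this is immediate: by Lemma~\ref{lem:bagnberg} such a $G$ arises from a log-concave density $g$ with $g>0$ wherever $0<G<1$, so $G'(\alpha^*)=g(\alpha^*)>0$.
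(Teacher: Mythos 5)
Your proof is correct and follows essentially the same route as the paper's: both reduce to the first-order conditions at the point $(\alpha^*,0)$ with $G(\alpha^*)=n_1/n$ and observe that the $\beta$-block of the score there is proportional to $\bar x_1-\bar x_0$, with concavity and Silvapulle's uniqueness closing the argument. Your explicit verification that $A=g(\alpha^*)/G(\alpha^*)>0$ is a welcome touch of care---the paper silently divides by $g(\hat\alpha)$ at the corresponding step---but it does not change the substance of the argument.
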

\begin{proof}
Under these assumptions,
the Theorem in \cite{silv:1981}
yields that there is a unique maximum likelihood
estimate $(\hat\alpha,\hat\beta)$.
It solves the equations
\begin{align}\label{eq:byalpha}
0&=\frac{\partial}{\partial \alpha} \ell(\alpha,\beta)
= \sum_{i=1}^n
y_i \frac{g(\alpha+x_i^\tran\beta)}{G(\alpha+x_i^\tran\beta)}
-(1-y_i) \frac{g(\alpha+x_i^\tran\beta)}{1-G(\alpha+x_i^\tran\beta)}
\end{align}
and
\begin{align}\label{eq:bybeta}
0&=\frac{\partial}{\partial \beta} \ell(\alpha,\beta)
= \sum_{i=1}^n
y_i \frac{g(\alpha+x_i^\tran\beta)}{G(\alpha+x_i^\tran\beta)}x_i
-(1-y_i) \frac{g(\alpha+x_i^\tran\beta)}{1-G(\alpha+x_i^\tran\beta)}x_i
\end{align}
where $g$ is the derivative of $G$.

If the MLE has $\hat\beta=0$,
then we can solve equation~\eqref{eq:byalpha} 
to find that $G(\hat\alpha) = n_1/n$.
Then equation~\eqref{eq:bybeta} yields
$n_1\bar x_1g(\hat\alpha)/G(\hat\alpha)=
n_0\bar x_0g(\hat\alpha)/(1-G(\hat\alpha))$
from which we get $\bar x_1=\bar x_0$.
Conversely, if $\bar x_0=\bar x_1$ then
$\hat\alpha = G^{-1}(n_1/n)$ and $\hat\beta = 0$
jointly solve equations~\eqref{eq:byalpha} and~\eqref{eq:bybeta}
and hence provide the unique maximum likelihood estimate.
\end{proof}

\subsection{Angle between $\hat\beta$ and $\bar x_1-\bar x_0$}

For the special case of logistic regression, 
where $G(z) =\exp(z)/(1+\exp(z))$,
suppose that Assumptions~\ref{as:full} and~\ref{as:silolap}
are satisfied.
Then the maximum
likelihood estimates are well defined.
Using the same argument as in Section~\ref{sec:proof},
but multiplying both sides of \eqref{eq:penultimate} 
by $\hat\beta^\tran$,
we find that $\hat\beta = 0$ if and only
if $\bar x_0=\bar x_1$.
Otherwise $\hat\beta^\tran(\bar x_1-\bar x_0)>0$.
In other words, when $\bar x_1\ne\bar x_0$,
then $\hat\beta$ makes less than
a ninety degree angle with $\bar x_1-\bar x_0$.
The result holds more generally.

\begin{theorem}\label{thm:angle}
Let $x_i\in\real^d$ and $y_i\in\{0,1\}$ for $i=1,\dots,n$
satisfy the full rank Assumption~\ref{as:full}
and the overlap Assumption~\ref{as:silolap}. 
Let $G$ satisfy the link Assumption~\ref{as:link}.
If $\bar x_1-\bar x_0\ne0$ and $(\hat\alpha,\hat\beta)$
maximize the log likelihood~\eqref{eq:loglikG},
then $\hat\beta^\tran(\bar x_1-\bar x_0)>0$.
\end{theorem}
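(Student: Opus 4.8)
The plan is to combine the concavity of the log likelihood with the equivalence already proved in Theorem~\ref{thm:zero}. Write $u=\bar x_1-\bar x_0\neq 0$ and, as in Section~4, let $g=G'$ and $\eta_i=\alpha+x_i^\tran\beta$. Under Assumption~\ref{as:link} both $\log G$ and $\log(1-G)$ are concave, so $\ell$ in~\eqref{eq:loglikG} is a concave function of $(\alpha,\beta)$; together with Assumptions~\ref{as:full} and~\ref{as:silolap}, Silvapulle's theorem gives a unique maximizer $(\hat\alpha,\hat\beta)$, at which $\nabla\ell$ vanishes. The whole argument then lives at this one global maximum and one carefully chosen reference point.

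The reference point is $p_0=(\alpha_0,0)$, where $\alpha_0$ solves $G(\alpha_0)=n_1/n$ --- the same point used in the proof of Theorem~\ref{thm:zero}, and one can justify its existence exactly as there; since $0<G(\alpha_0)<1$, Assumption~\ref{as:link} gives $g(\alpha_0)>0$. Repeating the computation behind \eqref{eq:byalpha} and \eqref{eq:bybeta} at $\beta=0$, the $\alpha$-component of $\nabla\ell(p_0)$ vanishes because $G(\alpha_0)=n_1/n$, while the $\beta$-component equals $g(\alpha_0)\bigl(n\bar x_1-n\bar x_0\bigr)=n\,g(\alpha_0)\,u$. Hence $\nabla\ell(p_0)=(0,\ n\,g(\alpha_0)\,u)$, a nonzero vector pointing in the $u$ direction of $\beta$-space.

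Now restrict $\ell$ to the segment joining $p_0$ to $(\hat\alpha,\hat\beta)$: set $v=(\hat\alpha,\hat\beta)-p_0$ and $\psi(t)=\ell(p_0+tv)$ for $t\in[0,1]$. Then $\psi$ is concave and differentiable, with $\psi'(1)=\nabla\ell(\hat\alpha,\hat\beta)^\tran v=0$ and $\psi'(0)=\nabla\ell(p_0)^\tran v=n\,g(\alpha_0)\,u^\tran\hat\beta$ (the $\alpha$-component contributes nothing). Concavity makes $\psi'$ nonincreasing on $[0,1]$, so $\psi'(0)\ge\psi'(1)=0$, which already gives $u^\tran\hat\beta\ge 0$, i.e.\ the angle is at most ninety degrees. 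To get the strict inequality, suppose $u^\tran\hat\beta=0$; then $\psi'$ is nonincreasing and equal to $0$ at both endpoints, hence $\psi'\equiv 0$ and $\psi$ is constant, so $\ell(p_0)=\ell(\hat\alpha,\hat\beta)$ and $p_0$ is also a maximizer. Uniqueness of the MLE then forces $p_0=(\hat\alpha,\hat\beta)$, so $\hat\beta=0$, and Theorem~\ref{thm:zero} gives $\bar x_1=\bar x_0$, contradicting $u\neq 0$. Therefore $\hat\beta^\tran(\bar x_1-\bar x_0)=u^\tran\hat\beta>0$.

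I expect the strict inequality, not the weak one, to be the crux: the bound $u^\tran\hat\beta\ge 0$ is essentially immediate from first-order optimality plus concavity, but excluding exact orthogonality seems to genuinely need both the uniqueness half of Silvapulle's theorem and the ``$\hat\beta=0\iff\bar x_1=\bar x_0$'' content of Theorem~\ref{thm:zero}. A purely local perturbation of $(\hat\alpha,\hat\beta)$ will not do it, because $\ell$ need not be strictly concave (for an exponential-type link, $\log(1-G)$ is affine on a half-line). A secondary, minor obstacle is the usual bookkeeping around $\alpha_0$: confirming that $G(\alpha_0)=n_1/n$ is solvable and that $g(\alpha_0)$ is a bona fide positive derivative, which I would dispatch in the same way the proof of Theorem~\ref{thm:zero} does.
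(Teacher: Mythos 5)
Your proof is correct, but it takes a genuinely different route from the paper's. The paper argues by constructing a second, shifted data set $x_i^*=x_i-y_i\Delta$ with $\Delta=\bar x_1-\bar x_0$, observing that the shifted data have equal group means so that (by Theorem~\ref{thm:zero}) the shifted log likelihood $\ell_*$ is maximized at $(\alpha_*,0)$ with $\alpha_*=G^{-1}(n_1/n)$, and then, under the contrary hypothesis $\hat\beta^\tran\Delta\le0$, using only the monotonicity of $G$ to get the chain $\ell(\hat\alpha,\hat\beta)\le\ell_*(\hat\alpha,\hat\beta)\le\ell_*(\alpha_*,0)=\ell(\alpha_*,0)$, contradicting uniqueness of the MLE. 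You instead stay entirely within the original data set: you compute the score at the reference point $p_0=(\alpha_0,0)$, note that it equals $(0,\,n\,g(\alpha_0)\,u)$, and exploit monotonicity of the derivative of the concave log likelihood along the segment from $p_0$ to the MLE, reducing the strict case to uniqueness plus Theorem~\ref{thm:zero}. The trade-offs: the paper's comparison argument needs no derivatives at all in this step (only that $G$ is nondecreasing), but it must verify that the overlap condition survives the shift and must handle a possible loss of full rank in the shifted design, which the paper explicitly remarks on; your argument avoids any second data set and its assumption-checking, at the cost of leaning on the score equations and on $g(\alpha_0)>0$ at $p_0$ --- the same level of smoothness the paper already uses in proving Theorem~\ref{thm:zero}, so this is not a loss of rigor relative to the paper. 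One small simplification available to you: once $\psi$ is shown constant on the segment, $p_0$ is a maximizer of $\ell$ at which the gradient is the nonzero vector $(0,\,n\,g(\alpha_0)\,u)$, which is already a contradiction without invoking uniqueness or Theorem~\ref{thm:zero} a second time.
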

\begin{proof}
We use two data sets. The original and 
a shifted one with $x_i^* = x_i-y_i\Delta$
where $\Delta \equiv \bar x_1-\bar x_0$.
We use $\ell_*$ to denote the log likelihood
of the shifted data set.  In the shifted data set, 
$(1/n_1)\sum_{i=1}^ny_ix_i^*=(1/n_0)\sum_{i=1}^n(1-y_i)x_i^*$
by construction.
The overlap assumption also holds in the shifted data set.
The shifted data set has MLE $\beta_*=0$
and $\alpha_* = G^{-1}(n_1/n)$.
Now suppose to the contrary of the theorem
that $\hat\beta^\tran\Delta \le 0$.
Then
\begin{align*}
\ell(\hat\alpha,\hat\beta) 
&=\sum_{i=1}^ny_i\log( G(\hat\alpha + \hat\beta^\tran x_i))
+(1-y_i)\log( 1-G(\hat\alpha+\hat\beta^\tran x_i))\\
&\le\sum_{i=1}^ny_i\log( G(\hat\alpha + \hat\beta^\tran (x_i-\Delta)))
+(1-y_i)\log( 1-G(\hat\alpha+\hat\beta^\tran x_i))\\
& = \ell_*(\hat\alpha,\hat\beta) \le \ell_*(\alpha_*,0) = \ell(\alpha_*,0).
\end{align*}
As a result, $\hat\beta$ is not the unique MLE of $\beta$
that it would have been, had it maximized~\eqref{eq:loglikG}
under the given assumptions.
The first inequality arises because $G$ is nondecreasing.
The second inequality follows because, from Theorem~\ref{thm:angle},
the maximizers of $\ell_*$ are $(\alpha_*,0)$. 
The full rank assumption may fail to hold in the shifted
data set, but if it does then $(\alpha_*,0)$ is still a maximizer
of $\ell_*$ though not the unique maximizer, and our proof
here does not need uniqueness in the shifted data.
\end{proof}

\section*{Acknowledgments}

We thank Alan Agresti, Brad Efron, Peter McCullagh, Mervyn
Silvapulle and Jeff Wu and Danny Wang for helpful comments.
ABO was supported by the National Science Foundation
under grant DMS-0906056.
PAR was contractually supported by the U.S.\ Army Armament Research, Development and Engineering Center (ARDEC).

\bibliographystyle{apalike}
\bibliography{logistic}

\begin{thebibliography}{}

\bibitem[An, 1998]{an:1998}
An, M.~Y. (1998).
\newblock Logconcavity versus logconvexity: A complete characterization.
\newblock {\em Journal of Economic Theory}, 80:350--369.

\bibitem[Bagnoli and Bergstrom, 2005]{bagn:berg:2005}
Bagnoli, M. and Bergstrom, T. (2005).
\newblock Log-concave probability and its applications.
\newblock {\em Economic Theory}, 26(2):445--469.

\bibitem[McCullagh and Nelder, 1989]{mccu:neld:1989}
McCullagh, P. and Nelder, J.~A. (1989).
\newblock {\em Generalized linear models}.
\newblock Chapman \& Hall, Boca Raton, FL.

\bibitem[Morgan and Smith, 1992]{morg:smit:1992}
Morgan, B. J.~T. and Smith, D.~M. (1992).
\newblock A note on {Wadley's} problem with overdispersion.
\newblock {\em Journal of the Royal Statistical Society, Series C},
  41(2):349--354.

\bibitem[Owen, 2007]{infimbal}
Owen, A.~B. (2007).
\newblock Infinitely imbalanced logistic regression.
\newblock {\em Journal of Machine Learning Research}, 8:761--773.

\bibitem[Ray et~al., 2013]{ray:roed:neye:2013}
Ray, D.~M., Roediger, P.~A., and Neyer, B.~T. (2013).
\newblock Commentary: Three-phase optimal design for sensitivity experiments.
\newblock {\em Journal of Statistical Planning and Inference}.
\newblock In press.

\bibitem[Silvapulle, 1981]{silv:1981}
Silvapulle, M.~J. (1981).
\newblock On the existence of maximum likelihood estimates for the binomial
  response models.
\newblock {\em Journal of the Royal Statistical Society, Series B},
  43:310--313.

\bibitem[Wu and Tian, 2013]{wu:tian:2013}
Wu, C. F.~J. and Tian, Y. (2013).
\newblock Three-phase optimal design of sensitivity experiments.
\newblock {\em Journal of Statistical Planning and Inference}.
\newblock In press.

\end{thebibliography}
\end{document}